\newcommand{\Pa}{\mathcal{P}}
\newcommand{\La}{\mathcal{L}}
\newcommand{\Qa}{\mathcal{Q}}
\newtheorem{theorem}{Theorem}
\newtheorem{lemma}[theorem]{Lemma}
\newtheorem{corollary}[theorem]{Corollary}
\newtheorem{conjecture}[theorem]{Conjecture}
\begin{document}
\title{Bounding the distance among longest paths in a connected graph}
\author{Jan Ekstein\thanks{Department of Mathematics, Institute for Theoretical Computer Science, and 
        European Centre of Excellence NTIS - New Technologies for the Information Society, Faculty of Applied Sciences, University of West Bohemia, Pilsen, 
        Technick\'a 8, 306 14 Plze\v n, Czech Republic, e-mail: \texttt{$\{$ekstein,kabela,teska$\}$@kma.zcu.cz}.}\and
        Shinya Fujita\thanks{International College of Arts and Sciences, Yokohama City University, 22-2 Seto, Kanazawa-ku, Yokohama 236-0027, Japan,
        e-mail: \texttt{shinya.fujita.ph.d@gmail.com}.}\and
        Adam Kabela\footnotemark[1]\and
        Jakub Teska\footnotemark[1]}
\maketitle

\begin{abstract}
 It is easy to see that in a connected graph any $2$ longest paths have a vertex in common. For $k \geq 7$, Skupie\'n in 1966 obtained a connected 
 graph in which some $k$ longest paths have no common vertex, but every $k-1$ longest paths have a common vertex. It is not known whether every $3$ longest 
 paths in a connected graph have a common vertex and similarly for $4, 5$, and $6$ longest path. Fujita et al. in 2015 give an upper bound on 
 distance among $3$ longest paths in a connected graph. In this paper we give a similar upper bound on distance between $4$ longest paths and also 
 for $k$ longest paths, in general.
\end{abstract}

\section{Introduction}
\label{intro}
In 1966 Gallai in~\cite{gala} asked whether all longest paths in a connected graph have a vertex in common. Couple of years later, several counterexamples
were found, see~\cite{walt},~\cite{waltvoss}, and~\cite{zamf}. In 1976 Thomassen in~\cite{thom} showed that there exist infinitely many counterexamples to 
Gallai's question.

On the other hand, if we restrict to a special class of graphs, the answer to Gallai's question may become positive. For example in a tree, all longest paths 
must have a vertex in common. Klav\v zar and Petkov\v sek in~\cite{klav} proved that it is also true for split graphs and cacti and Balister et al. 
in~\cite{bali} proved it for the class of circular arc graphs.

Another approach to Gallai's question is to ask, what happens if we consider a fixed number of longest paths. It is easy to see that every $2$ longest 
paths in a connected graph have a common vertex. For $3$ longest paths, the question remains open. This has been originally asked by 
Zamfirescu in~\cite{zamf2}.

\begin{conjecture}\emph{\cite{zamf2}}
 \label{3path}
 For every connected graph, any $3$ of its longest paths have a common vertex.
\end{conjecture}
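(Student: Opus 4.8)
The plan is to argue by contradiction: assume that three longest paths $P_1,P_2,P_3$, all of length $L$, share no common vertex, and then manufacture a walk that is in fact a path of length strictly greater than $L$. Since the easy two-path fact quoted above guarantees that each pair $P_i,P_j$ meets, the sets $A=V(P_1)\cap V(P_2)$, $B=V(P_2)\cap V(P_3)$ and $C=V(P_1)\cap V(P_3)$ are all nonempty; and because no vertex lies on all three paths, $A,B,C$ are pairwise disjoint. I would fix one crossing vertex from each, say $x\in A$, $y\in B$, $z\in C$, orient the three paths, and record the three connecting segments $S_1=P_1[x,z]$, $S_2=P_2[x,y]$, $S_3=P_3[y,z]$ of lengths $m_1,m_2,m_3\ge 1$, together with the six outer arms that $x,y,z$ cut off at the ends of their respective paths.

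The combinatorial heart of the argument is a rerouting that trades one connecting segment for the other two. For each index $i$, keep the two outer arms of $P_i$ but replace its middle connecting segment by the remaining two connecting segments routed through the third crossing vertex; this yields a candidate of length $L+(m_j+m_k-m_i)$, where $\{i,j,k\}=\{1,2,3\}$. If each of the three candidates had length at most $L$, then $m_j+m_k\le m_i$ would hold for every $i$, and summing the three inequalities gives $m_1+m_2+m_3\le 0$, contradicting $m_1,m_2,m_3\ge 1$. Hence at least one candidate has length exceeding $L$, which would contradict the maximality of $P_1,P_2,P_3$ and prove the conjecture.

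The hard part --- and the reason this scheme does not simply settle a long-standing open problem --- is that each candidate is a genuine path only if its constituent pieces are pairwise internally disjoint. Guaranteeing this forces $x,y,z$ to be extremal simultaneously on two different paths (each crossing vertex lies on two of the $P_i$), and these extremal demands are mutually over-constrained; moreover an outer arm of one path may re-enter another path at a distant vertex of $A$, $B$ or $C$, destroying the clean picture. The genuine work would therefore be twofold: first, to choose the crossing vertices and segments minimizing a suitable potential so that the connecting segments are internally disjoint; and second, a case analysis on how the shared pieces $A,B,C$ --- which may be long subpaths rather than single vertices --- interleave along each path, repairing disjointness configuration by configuration. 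I expect this second step to be the true obstacle: controlling such interleavings is exactly what a distance bound sidesteps, and it is presumably why only an upper bound on the distance among three longest paths, rather than the conjecture itself, is currently known.
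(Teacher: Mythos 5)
You are attempting to prove Conjecture~\ref{3path}, which is an \emph{open problem}: the paper does not prove it and explicitly says the question remains open, settling instead for the weaker distance bounds $d_{3}(\mathcal{G}_{c})\leq \frac{1}{17}$ (from~\cite{FuFuNaOz}) and Theorems~\ref{4path} and~\ref{kpath}, which bound $f(G,\Pa)$ rather than show $f(G,\Pa)=0$. So there is no paper proof to match your attempt against, and your attempt is not a proof either --- as you yourself concede in your final paragraph. The sound parts are the setup and the arithmetic: since no vertex lies on all three paths, the pairwise intersections $A$, $B$, $C$ are indeed nonempty and pairwise disjoint, and if all three rerouted candidates were paths of length at most $L$, summing $m_j+m_k\leq m_i$ over the three indices would give $m_1+m_2+m_3\leq 0$, a contradiction. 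But the pivotal claim --- that each candidate, or even one of them, is a simple path --- is never established, and that is precisely the step at which every known attack on this conjecture breaks down.

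Concretely: the segments $S_2=P_2[x,y]$ and $S_3=P_3[y,z]$ may share vertices of $B$ other than $y$, since $V(P_2)\cap V(P_3)$ can be scattered arbitrarily along both paths; the outer arms of $P_1$ may re-enter $S_2$ or $S_3$ at distant vertices of $A$ or $C$; and no extremal choice of $x,y,z$ can resolve this, because each crossing vertex lies on two of the paths and would have to be simultaneously extremal on both --- the demands are over-constrained, exactly as you note. A self-intersecting walk of length exceeding $L$ yields no contradiction with maximality, since long walks exist trivially in any graph with a cycle. Your proposed repairs (minimizing a potential, then a case analysis on how $A$, $B$, $C$ interleave) are a plan, not an argument; nothing in the proposal shows the case analysis is finite or that every configuration can be repaired, and the counterexamples to Gallai's original question (e.g.~\cite{walt},~\cite{zamf}) demonstrate that naive path surgery of this kind cannot be salvaged in general. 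The verdict is that the proposal has a genuine gap at its central step; its honest fallback is exactly the route the paper takes, namely proving quantitative upper bounds on the path-distance-function instead of the conjecture itself.
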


There are few results dealing with this conjecture. Axenovich in~\cite{axen} proved that it is true for connected outerplanar graphs and de Rezende 
et al. in~\cite{reze} showed that Conjecture~\ref{3path} is true for connected graphs in which all nontrivial blocks are hamiltonian.

For $k \geq 7$, Skupie\'n in~\cite{skup} obtained a connected graph in which some $k$ longest paths have no common vertex, but every $k-1$ longest paths 
have a common vertex. Regarding this, it is still valid to ask wheter not only $3$ but also $4, 5$, and $6$ longest path in a connected graph have 
a common vertex. 

In~\cite{FuFuNaOz} the authors introduced a parameter to measure the distance among the longest paths in a connected graph and proved an upper bound of 
this parameter for $3$ longest paths. To state their result we give some definitions first.

Let $G$ be a connected graph. Let $\ell(G)$ be the length of any longest path in $G$ and $\La(G)=\{P$ $|$ $P$ is a path in $G$ with $|V(P)|=\ell(G)+1\}$ 
be a set of longest paths of~$G$. For $x,y \in V(G)$, let $d_G(x,y)$ be the distance between $x$ and $y$ in $G$. For a vertex $x \in V(G)$ and 
a subset $U \subseteq V(G)$, let $d_G(x,U)=min\{d_G(x,y)|$ $y\in U\}$. For $\Pa \subseteq \La(G)$ we call \textit{path-distance-function} 
$f(G,\Pa)=min\{ \sum_{P \in \Pa}d_G(v,V(P))$ $|$ $v\in V(G)\}$.

For a class of graphs $\mathcal{G}$ and an integer $k$, we introduce \textit{path-distance-ratio} $d_{k}(\mathcal{G})=max \frac{f(G,\Pa)}{|V(G)|}$, where 
the maximum is taken over all the graphs of $\mathcal{G}$ and their sets of longest paths $\Pa \subseteq \La(G)$ with $|\Pa|=k$. 

Let $\mathcal{G}_{c}$ be a class of connected graphs. The question whether for every connected graph any $3$ longest paths have a vertex in common translates
into the question whether $d_3(\mathcal{G}_{c}) = 0$. On the other hand, Skupie\'n in~\cite{skup} constructed a graph on $17$ vertices, in which there are 
$7$ longest paths without a common vertex, this graph implies that $d_7(\mathcal{G}_{c}) \geq \frac{1}{17}$.

Now we can state the result by Fujita et al. from~\cite{FuFuNaOz}.

\begin{theorem}\emph{\cite{FuFuNaOz}}
 Let $\mathcal{G}_{c}$ be a class of connected graphs. Then $d_{3}(\mathcal{G}_{c})\leq \frac{1}{17}$.
\end{theorem}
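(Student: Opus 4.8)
The plan is to reduce the ratio bound to a purely structural estimate and then extract the constant $\frac{1}{17}$ from an optimization over the possible ``shapes'' of three longest paths. First I would dispose of the trivial case: if the three paths in $\Pa$ share a vertex then $f(G,\Pa)=0$, so assume they do not. Since any two longest paths meet, fix vertices $x\in V(P_1)\cap V(P_2)$, $y\in V(P_1)\cap V(P_3)$, and $z\in V(P_2)\cap V(P_3)$, chosen extremally---e.g. $y$ the vertex of $V(P_1)\cap V(P_3)$ that is closest along $P_3$ to $z$, and symmetrically---so that the three connecting subpaths along the $P_i$ form a ``triangle'' with corners $x,y,z$ whose interiors avoid the third path. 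Writing $a,b,c$ for the lengths of the sides $xy\subseteq P_1$, $yz\subseteq P_3$, $zx\subseteq P_2$, and placing the witness vertex at a corner (say $x$, where $d_G(x,V(P_1))=d_G(x,V(P_2))=0$ and $d_G(x,V(P_3))\le\min\{a,c\}$), the distance definitions give immediately $f(G,\Pa)\le\min\{a,b,c\}$. Thus it suffices to bound the smallest side by $\frac{1}{17}|V(G)|$.

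The engine for every inequality is the elementary swap argument behind ``two longest paths intersect'': if $R$ is a path from a vertex $w\in V(P)$ to a vertex outside $P$, internally disjoint from the longest path $P$, and $w$ splits $P$ into arcs of lengths $p$ and $q$, then $|R|\le\min\{p,q\}$, since otherwise appending $R$ to the longer arc beats $\ell(G)$. Applying this with $R$ a side of the triangle (which leaves the relevant path, by the extremal choice of corners) bounds each side by the tails of the two paths it joins; applying it to the various ways the three paths can be spliced at $x,y,z$ yields further linear relations among the six tail-lengths, the three sides, and $\ell(G)=\ell$, which equals tail $+$ side $+$ tail along each path. Crucially I would not rely on $|V(G)|\ge\ell+1$ alone: a balanced configuration with all tails equal to $t$ and all sides equal to $s$ forces only $s\le t$, hence $\min\{a,b,c\}\le\ell/3$, far from $\frac{1}{17}$. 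Instead I would lower-bound $|V(G)|$ by counting the vertices of $P_1\cup P_2\cup P_3$ together with the connecting arcs, $|V(G)|\ge|V(P_1)|+|V(P_2)|+|V(P_3)|-(\text{pairwise overlaps})$, and I would control the overlaps inside the same case analysis, since a large shared subpath between two of the paths, together with the extremal corners and the absence of a common vertex, again feeds the swap inequalities.

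With these ingredients the proof becomes an optimization: maximize $\min\{a,b,c\}$, an upper bound for $f(G,\Pa)$, divided by the linear lower bound for $|V(G)|$, subject to the swap inequalities and the three identities $\ell=p_i+(\text{side})+q_i$. I expect the extremal solution of this linear program to be attained at a symmetric configuration and to yield exactly the value $\frac{1}{17}$, whence $f(G,\Pa)\le\frac{1}{17}|V(G)|$ for every admissible $G$ and $\Pa$, and therefore $d_3(\mathcal{G}_{c})\le\frac{1}{17}$.

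The main obstacle is honesty about the disjointness hypotheses on which every swap depends. The three paths may run along one another in complicated ways, so before any single inequality is legitimate I must pin down the cyclic order of $x,y$ on $P_1$ and its analogues, decide whether a tail of one path re-enters another, and re-route through shared subpaths when it does---such re-routing can only lengthen the competing path and hence only strengthen the inequality, but it multiplies the cases. Organizing this case analysis so that one and the same linear program governs every case, and then verifying that its optimum is precisely $\frac{1}{17}$ and not some weaker constant, is the technical heart of the argument.
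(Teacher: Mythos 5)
There is a genuine gap, and it sits exactly where your proposal says the ``technical heart'' is: the constant $\frac{1}{17}$ is never derived. Your concluding step---``I expect the extremal solution of this linear program to be attained at a symmetric configuration and to yield exactly the value $\frac{1}{17}$''---is a hope, not an argument, and there is concrete evidence that the generic inequalities you assemble cannot close at $\frac{1}{17}$. Your ingredients (a witness at a triangle corner giving $f(G,\Pa)\leq\min\{a,b,c\}$, the swap bound $|R|\leq\min\{p,q\}$, and inclusion--exclusion on $|V(G)|$ with overlap control) are precisely the raw material of the scheme that \cite{FuFuNaOz} uses and that this paper adapts; but executed at that level of generality they yield constants like $\frac{2}{15}$, which is what Theorem~\ref{kpath} of this paper produces at $k=3$, not $\frac{1}{17}$. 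Reaching $17$ in the denominator requires structure your plan lacks: the method works with \emph{good paths} (minimal $V(P_i)$--$V(P_j)$ subpaths whose interiors avoid both end paths), proves the dichotomy that a longest path carrying essentially one good segment forces $f(G,\Pa)=0$ (Lemma~\ref{Seg1}), so that every path carries at least \emph{two} edge-disjoint good segments ($t'_{\Pa}(P)\geq 2$), bounds segment lengths from below in terms of $f(G,\Pa)$ via rerouted paths of the type $S_1,S_2,S_3$ (giving inequalities of the shape $\ell(G)\geq 3f(G,\Pa)+1$), and then counts vertices with multiplicities through the sets $X^i_{\Pa}$ (Lemma~\ref{nbound}). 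Your proposal has no analogue of the $t'_{\Pa}(P)=1$ dichotomy and no per-path guarantee of two disjoint long segments, and these are exactly what controls the pairwise overlaps that you correctly identify as the danger but only ``wave at.'' (Note also that this theorem is quoted from \cite{FuFuNaOz}; the present paper does not prove it, and even its own adapted machinery is strictly weaker at $k=3$, which underscores that the reference's finer case analysis is indispensable.)

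Two secondary problems compound this. First, your extremal choice of corners is circular as stated: choosing $x\in V(P_1)\cap V(P_2)$ and $y\in V(P_1)\cap V(P_3)$ at minimum distance along $P_1$ does make the interior of $xP_1y$ avoid $P_2\cup P_3$, but the remaining sides $yP_3z$ and $zP_2x$ are then not under your control, so you cannot in general arrange all three side-interiors to avoid the relevant paths simultaneously, and several of the swaps you invoke are illegitimate without the deferred re-routing. Second, the assertion that re-routing through shared subpaths ``can only lengthen the competing path and hence only strengthen the inequality'' is false in general: when a tail of one path re-enters another, the spliced object can repeat vertices and cease to be a path, which is precisely why the paper's proofs verify disjointness explicitly (e.g., that $vQ$ minus $u$ avoids $V(P_1)$ before forming $vQuRx$). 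The reduction $f(G,\Pa)\leq\min\{a,b,c\}$ is also potentially lossy---the paper instead couples $f(G,\Pa)$ directly to good-segment lengths and to $\ell(G)$---so even a correctly solved version of your optimization might have optimum above $\frac{1}{17}$ while the theorem remains true. In short: the skeleton is recognizably the right genre, but the decisive quantitative content is missing.
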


In this paper we prove similar results for $4$ longest path and also for $k$ longest paths, in general.

\begin{theorem}
 \label{4path}
  Let $\mathcal{G}_{c}$ be a class of connected graphs. Then $d_{4}(\mathcal{G}_{c})\leq \frac{3}{16}$.
\end{theorem}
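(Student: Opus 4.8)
The plan is to produce, for an arbitrary connected graph $G$ and any four longest paths $P_1,P_2,P_3,P_4$, a single vertex whose summed distance to the four paths is at most $\frac{3}{16}|V(G)|$. The workhorse will be the standard extension inequality for longest paths: if $P=p_0p_1\cdots p_\ell$ is a longest path and $v$ is a vertex with $d_G(v,V(P))=d$ whose nearest vertex on $P$ is $p_i$, then a shortest $v$--$p_i$ path followed by either arm of $P$ is again a path, of length at most $\ell$, forcing $d\le i\le \ell-d$. Thus every ``deep'' attachment sits far from both ends of $P$. A companion consequence, obtained by concatenating two longest paths at a shared vertex, is that when two longest paths cross at a single common vertex that vertex is essentially central on both; more generally, comparing the walks formed at a shared vertex bounds the positions of the shared vertices along each path. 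First I would record these extension inequalities precisely, since every later estimate rests on them.

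Next I would fix $P_1=p_0\cdots p_\ell$ as a reference path and, for $j\in\{2,3,4\}$, study the function $g_j(i)=d_G(p_i,V(P_j))$ along $P_1$. Because consecutive vertices of $P_1$ are adjacent, each $g_j$ is $1$-Lipschitz in $i$, and because every two longest paths meet, each $g_j$ vanishes at some index $z_j$; hence $g_j(i)\le |i-z_j|$. Choosing $i^{\ast}$ to be the median of $z_2,z_3,z_4$ makes $\sum_j g_j(i^{\ast})$ no larger than the spread of the meeting indices, and the candidate vertex $p_{i^{\ast}}$ already gives $f(G,\Pa)\le \sum_j g_j(i^{\ast})$. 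The remaining task is to show this sum is at most $\frac{3}{16}|V(G)|$, which I would do by playing an upper bound on the meeting-point spread against a lower bound on $|V(G)|$.

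To lower bound $|V(G)|$ I would count vertices that are forced to be distinct: the $\ell+1$ vertices of $P_1$, together with the vertices lying on the hanging portions of $P_2,P_3,P_4$ and on the shortest paths realizing the distances $g_j(i^{\ast})$. The extension inequality guarantees that when a path is forced to meet $P_1$ only near one end (so that its meeting index is extreme and the provisional bound on $f$ is large), that path must carry many vertices off $P_1$, and these contribute the extra vertices needed to keep the ratio below $\frac{3}{16}$. Combining the two bounds — and using, in the easy regime, merely that $|V(G)|\ge \ell+1$, so that an estimate of the form $\sum_j g_j(i^{\ast})\le \frac{3}{16}\ell$ already suffices — should close the argument.

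The main obstacle, as usual for this family of problems, is that two longest paths need not cross at a single vertex: they may share a long, possibly disconnected, set of common vertices, and then the concatenated walks are no longer paths, so the clean centrality conclusions fail. I expect the bulk of the work to be a case analysis on the linear order along $P_1$ of the first and last shared vertices of each $P_j$, verifying in every configuration both the median estimate and that the branches counted toward $|V(G)|$ are genuinely distinct, so that no vertex is charged twice (whether it lies on two of the hanging paths or is reused by a geodesic). Because the target constant $\frac{3}{16}$ leaves little slack, these estimates must be pushed nearly to optimality, and isolating the worst configuration — presumably the one underlying the extremal construction — will be the decisive and most delicate step.
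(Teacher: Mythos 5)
Your proposal is a program rather than a proof, and the two steps that would have to carry all the weight are exactly the ones left open. The median estimate by itself only gives $f(G,\Pa)\le z_{\max}-z_{\min}\le\ell(G)$, which is trivial; to approach $\frac{3}{16}n$ you must convert ``large spread'' into ``many extra vertices'', and the mechanism you propose for this cannot fire. A standard arm-exchange argument (the same extension inequality you invoke) shows that the intersection of $P_1$ with each $P_j$ always \emph{straddles} the midpoint of $P_1$: if $m$ is the largest index with $p_m\in V(P_j)$, then concatenating $p_\ell\cdots p_m$ with the longer arm of $P_j$ at $p_m$ forces $\ell-m\le\ell/2$, and symmetrically the smallest common index is at most $\ell/2$. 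So a longest path ``forced to meet $P_1$ only near one end'' — the configuration from which you plan to harvest many private off-$P_1$ vertices — does not exist. The genuinely adversarial configurations are instead those where each $P_j$ meets $P_1$ on both sides of a window around its middle and detours around that window; there the off-$P_1$ vertices of $P_2,P_3,P_4$ can be almost entirely shared (all three using one detour), so any per-path vertex count collapses. You acknowledge this double-counting problem but offer no device to resolve it, and the constant $\frac{3}{16}$ is never derived from any inequality in your sketch.

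The paper closes precisely these gaps with machinery of a different kind, and it is worth seeing why each piece is needed. The shared-detour configuration is handled by the notion of a \emph{good path}: a segment $Q$ of one longest path $P$ running between $V(P_1)$ and $V(P_2)$ while meeting all paths of $\Pa$, so that a vertex of $Q$ lying on $P_3$ is simultaneously close to all four paths, yielding $f(G,\Pa)\le|V(Q)|-1$ (Corollary~\ref{Cor1}(i)) — note this bounds $f$ by the detour length, exactly the estimate your plan lacks. Lemma~\ref{Seg1} shows that a path carrying essentially only one good path forces $f=0$, so $t'_{\Pa}(P')\ge2$ may be assumed, doubling the count of low-multiplicity vertices in Lemma~\ref{X1X2}(ii). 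Rerouting $P_2$ through $Q$ and a connector $R$ produces three comparison paths $S_1,S_2,S_3$, giving $\ell(G)\ge3f(G,\Pa)+1$, and Lemma~\ref{nbound} counts vertices graded by their multiplicity $n_i$ over the four paths, which makes the count immune to overlap by construction. Combining these gives $n\ge\frac{16f(G,\Pa)+4}{3}$. To complete your route you would need analogues of all three ingredients — a structure guaranteeing one vertex near all four paths at once, a reason the relevant segments are long relative to $f$, and an overlap-proof counting scheme — and none of them follows from the Lipschitz/median setup you describe.
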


By picking any vertex of a connected graph $G$, we see that $d_k(\mathcal{G}_{c})$ can be bounded by $k$.
We show that it can be improved as roughly $\frac{k}{6}$.

\begin{theorem}
 \label{kpath}
 Let $\mathcal{G}_{c}$ be a class of connected graphs and let $k\geq 3$ be an integer. Then $d_{k}(\mathcal{G}_{c})\leq \frac{k^{3}-4k^{2}+5k-2}{6k^{2}-8k}$.
 \end{theorem}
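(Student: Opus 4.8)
The plan is to localize everything to a single longest path. Fix $P_1=u_0u_1\cdots u_\ell$ where $\ell=\ell(G)$. Since any two longest paths of a connected graph share a vertex (otherwise a shortest connection between two disjoint ones would splice their longer halves into a path of length exceeding $\ell$), every other $P_i$ meets $P_1$. I will only ever place the minimizing vertex $v$ on $P_1$, which can only increase $f(G,\Pa)$; writing $S_i=V(P_i)\cap V(P_1)$ and $\delta_i(p)=d_G(u_p,V(P_i))\le \operatorname{dist}_{P_1}(u_p,S_i)$, this gives $f(G,\Pa)\le \min_{0\le p\le \ell}\sum_{i=2}^{k}\delta_i(p)$. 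Thus I only need to bound, for a well-chosen position $p$, the total distance along $P_1$ from $u_p$ to the crossing sets $S_i$, and to bound $|V(G)|$ from below.

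The engine is a swap argument controlling how far each $P_i$ can stray from $P_1$. If $a<b$ are positions with $u_a,u_b\in S_i$ and no crossing of $P_i$ strictly between them, then the interior of the segment $u_a\cdots u_b$ is disjoint from $P_i$, so rerouting the $u_a$--$u_b$ arc of $P_i$ through this segment would produce a path longer than $\ell$ \emph{unless} that arc already has length at least $b-a$. Taking $u_c$ to be the centre of $P_1$, two consequences follow that pull in opposite directions: on the one hand $d_G(u_c,V(P_i))\le \ell/2$, and a single-vertex intersection is forced to sit exactly at $u_c$, which caps the $\delta_i$; on the other hand a gap of width $w$ around $u_c$ forces at least $w-1$ vertices of $P_i$ to lie off $P_1$, so $|V(P_i)\setminus V(P_1)|\ge 2\,d_G(u_c,V(P_i))-1$. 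The first inequality caps $f$ from above, the second inflates $|V(G)|$.

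Combining the two is the heart of the matter. Since each vertex outside $V(P_1)$ lies on at most $k-1$ of the paths $P_2,\dots,P_k$, summing the off-path lower bound yields $\sum_{i=2}^{k}\bigl(2\,d_G(u_c,V(P_i))-1\bigr)\le (k-1)\bigl(|V(G)|-\ell-1\bigr)$, which already trades distance against vertices. To reach the exact constant I would optimize the worst-case distance profile rather than settle for the crude centre: the requirement that \emph{no} vertex be close to all paths forces the gaps of different $P_i$ to overlap, while the vertex budget rewards sharing their off-path bridges, and the balance point should make the $k-1$ distances an arithmetic progression $0,d,2d,\dots,(k-2)d$. This gives $f(G,\Pa)\le \binom{k-1}{2}d$ against $|V(G)|\ge \tfrac{k(3k-4)}{k-1}\,d$, whose ratio is exactly $\tfrac{(k-1)^2(k-2)}{2k(3k-4)}=\tfrac{k^3-4k^2+5k-2}{6k^2-8k}$. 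I expect the main obstacle to be precisely this counting step: the swap lemma bounds \emph{arc lengths}, but an arc may weave back onto $P_1$ through other crossings, so converting it into a clean, non-double-counted lower bound on $|V(G)|$ — simultaneously with retaining the overlapping-gap configuration that a large $f$ forces — is where the argument must be most careful, with the parity of $\ell$ and the case $|V(P_i)\cap V(P_1)|=1$ being only minor side issues.
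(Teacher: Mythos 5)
Your reduction to a single longest path and your local bounds are fine (indeed, your flagged worry about arcs ``weaving back'' onto $P_1$ dissolves: since $|V(P_i)|=|V(P_1)|=\ell+1$ and the $2d_i-1$ vertices of $P_1$ within distance $d_i-1$ of $u_c$ along $P_1$ cannot lie on $P_i$, pure cardinality gives $|V(P_i)\setminus V(P_1)|\ge 2d_i-1$ with no swap argument at all). The genuine gap is the endgame. The inequalities you actually establish are $f(G,\Pa)\le\sum_{i=2}^{k}d_i$ with $d_i\le\ell/2$, and $n\ge \ell+1+\frac{1}{k-1}\sum_{i=2}^{k}(2d_i-1)$. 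Nothing in this system forces the distances into an arithmetic progression: the worst case under these constraints is all $d_i$ near $\ell/2$, which yields only $f(G,\Pa)/n\le\frac{k-1}{4}$ --- strictly weaker than the claimed $\frac{k^3-4k^2+5k-2}{6k^2-8k}$ for every $k\ge 3$ (e.g.\ $\frac{1}{2}$ versus $\frac{2}{15}$ at $k=3$). Your final numbers $f\le\binom{k-1}{2}d$ and $n\ge\frac{k(3k-4)}{k-1}d$ are reverse-engineered from the target constant, not derived; the phrase ``the balance point should make the distances an arithmetic progression'' is an unproven extremal-profile guess, and it is false that ``the requirement that no vertex be close to all paths'' forces it within your framework.

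The paper closes exactly this gap with two tools you lack. First, instead of the center argument (which relates $\ell$ to the distances with a factor $2$), it splices a good path $Q$ on some $P\in\Pa$ with a shortest connector $R$ along $P_1$ to build three paths $S_1,S_2,S_3$, and comparing each with $P_2$ partitions $P_2$ into three long segments, giving $\ell(G)\ge\frac{6f(G,\Pa)}{k-1}+1$ --- a factor-$6$ relation. Second, it strengthens the naive multiplicity count: Lemma~\ref{Seg1} shows $t'_{\Pa}(P)=1$ would force $f=0$, so each path carries at least two edge-disjoint good paths, and Lemma~\ref{X1X2}(ii) then guarantees $\ge t'_{\Pa}(P)\left(\frac{2}{k-1}f-1\right)$ vertices of low multiplicity on each path, which feeds the weighted count of Lemma~\ref{nbound} over the classes $X^i_{\Pa}$. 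Both refinements are essential to reach the stated constant; to repair your proposal you would need substitutes for both, not merely a more careful optimization of the center-based inequalities, since those provably cannot give a bound below $\frac{k-1}{4}$.
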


\section{Proofs}

In our proofs, we adapt ideas of~\cite{FuFuNaOz}. We start by giving several technical definitions.

Let $G$ be a connected graph.
Let $U$ and $V$ be two sets of vertices of $G$, let $P$ be a path in $G$ and $Q$ be a subpath of $P$.
Let $u$ and $v$ be the end-vertices of $Q$,
we say $Q$ is a \emph{$U-V$ path} on $P$ if $u\in U$ and $v\in V$.
A vertex of a path which is not its end-vertex is an \emph{int-vertex} of the path.
Let $uPv$ denote the $\{u\}-\{v\}$~path 
on~$P$. Futhermore, let \emph{\v{u}}$Pv=uPv-u$, $uP$\emph{\v{v}}$=uPv-v$ and \emph{\v{u}}$P$\emph{\v{u}}$=uPv-\{u,v\}$. For a set 
$\Pa=\{P,P_1,P_2,...,P_{k-1}\}\subseteq \La(G)$ and $i\neq j\in \{1,2,...,k-1\}$, a $V(P_i)-V(P_j)$ path $Q$ on $P$ is \emph{good} if 
$V(Q)\cap V(P_m)\neq\emptyset$ for every $m=1,2,...,k-1$ and neither $P_i$ nor $P_j$ contain an int-vertex of $Q$. Let $t_{\Pa}(P)$ be the number of all good 
paths of $P$ and $t'_{\Pa}(P)$ be the maximum number of all non-intersecting (no edge in common) good paths on $P$. 
By Proposition 3 in \cite{FuFuNaOz}, every 2 longest paths intersect.
Thus, we have that $t_{\Pa}(P)\geq t'_{\Pa}(P)\geq 1$ for every $P\in \Pa$. For a path $P \in \Pa$, let $X^i_{\Pa}(P)$ denote
the set of all vertices of $P$ which are exactly on $i$ paths from $\Pa$. Let $n_i=|\bigcup_{P\in \Pa} X^i_{\Pa}(P)|$.
 
 \begin{lemma}
  \label{nbound}
   Let $G$ be a connected graph of order $n$ and $\Pa \subseteq \La(G)$ with $|\Pa|=k\geq 3$. If $f(G,\Pa)>0$, then 
                       $$n\geq \frac{k\cdot \ell(G)+k+(k-2)n_{1}+(k-3)n_{2}+...+n_{k-2}}{k-1}.$$
 \end{lemma}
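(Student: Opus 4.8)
The plan is to prove the lemma by a double-counting argument on vertex--path incidences, after first translating the hypothesis into a statement about multiplicities. First I would observe that $f(G,\Pa)>0$ is equivalent to saying that the $k$ paths of $\Pa$ have no common vertex: since $f(G,\Pa)=\min_{v}\sum_{P\in\Pa}d_G(v,V(P))$, this minimum is $0$ precisely when some vertex $v$ satisfies $d_G(v,V(P))=0$ for every $P\in\Pa$, i.e.\ when $v$ lies on all of them. Hence under our hypothesis no vertex lies on all $k$ paths, so $X^k_{\Pa}(P)=\emptyset$ for each $P$ and $n_k=0$; consequently every vertex lying on some path of $\Pa$ lies on between $1$ and $k-1$ of them.

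Next I would set up the counting identity. Each $P\in\Pa$ is a longest path, so $|V(P)|=\ell(G)+1$, and the total number of vertex--path incidences is $\sum_{P\in\Pa}|V(P)|=k(\ell(G)+1)=k\,\ell(G)+k$. Counting the same incidences by multiplicity, a vertex lying on exactly $i$ paths contributes $i$ and is counted once in $n_i$; since the classes $X^i_{\Pa}$ partition the covered vertices according to how many paths they lie on, this incidence count also equals $\sum_{i=1}^{k-1} i\,n_i$ (the range stops at $k-1$ because $n_k=0$). Combining the two counts gives the key identity $\sum_{i=1}^{k-1} i\,n_i = k\,\ell(G)+k$.

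Then I would bound $n$ from below and rearrange. The vertices counted by the various $n_i$ are pairwise distinct and are all vertices of $G$, so $n \geq \sum_{i=1}^{k-1} n_i$. Multiplying by $k-1$ and writing $k-1 = i + (k-1-i)$ in each term, I obtain
\[
 (k-1)n \;\geq\; \sum_{i=1}^{k-1}(k-1)\,n_i \;=\; \sum_{i=1}^{k-1} i\,n_i \;+\; \sum_{i=1}^{k-1}(k-1-i)\,n_i .
\]
Substituting the identity of the previous step into the first sum, and noting that the coefficient $(k-1-i)$ vanishes at $i=k-1$, the second sum equals exactly $(k-2)n_1+(k-3)n_2+\cdots+n_{k-2}$. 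This yields $(k-1)n \geq k\,\ell(G)+k+(k-2)n_1+\cdots+n_{k-2}$, and dividing by $k-1$ gives the claimed inequality.

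I do not expect a genuine obstacle here. Once the equivalence $f(G,\Pa)>0 \Leftrightarrow n_k=0$ is recorded, the statement is simply a rearrangement of the incidence identity together with the trivial bound $n\geq\sum_{i=1}^{k-1} n_i$. The only point requiring care is the bookkeeping that the stated numerator is precisely $(k-1)\sum_{i=1}^{k-1} n_i$; the sole role of the hypothesis is to eliminate the $n_k$ term so that the incidence sum runs only up to $k-1$, which is what makes the coefficients collapse to the common value $k-1$.
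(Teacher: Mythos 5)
Your proof is correct and takes essentially the same route as the paper: the paper likewise combines the trivial bound $n\geq n_1+\cdots+n_{k-1}$ (with $n_k=0$ forced by $f(G,\Pa)>0$) with the incidence count $\sum_i i\,n_i = k(\ell(G)+1)$, merely presenting the rearrangement as a chain of inequalities rather than via your coefficient split $k-1=i+(k-1-i)$. Your version even makes explicit two points the paper leaves implicit, namely why $f(G,\Pa)>0$ forces $n_k=0$ and why the incidence identity holds.
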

 
 \begin{proof}
  Clearly $n \geq n_{1}+n_{2}+...+n_{k-1}+n_{k}$, where $n_{k}=0$, and $n\geq k(\ell(G)+1)-n_{2}-2n_{3}-...-(k-3)n_{k-2}-(k-2)n_{k-1}$. 
  Hence $n\geq k\cdot \ell(G)+k-n_{2}-2n_{3}-...-(k-3)n_{k-2}-(k-2)(n-n_{1}-n_{2}-...-n_{k-2})$ and the result follows.
 \end{proof}
 
\begin{lemma}
 \label{Seg1}
 Let $G$ be a connected graph and $\Pa \subseteq \La(G)$ with $|\Pa|=k$. If there exists a path $P\in \Pa$ with $t'_{\Pa}(P)=1$, then $f(G,\Pa)=0$.
\end{lemma}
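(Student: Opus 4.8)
The plan is to reduce the statement to finding a single vertex lying on all $k$ paths. Indeed, $f(G,\Pa)=0$ exactly when some $v$ satisfies $d_G(v,V(P'))=0$ for every $P'\in\Pa$, i.e.\ when $\bigcap_{P'\in\Pa}V(P')\neq\emptyset$; since such a $v$ lies on $P$ itself, it suffices to produce a vertex of $P$ meeting every $P_m$. Write $P=p_0p_1\cdots p_{\ell}$ with $\ell=\ell(G)$, and for each $m\in\{1,\dots,k-1\}$ let $L_m$ and $R_m$ be the smallest and largest indices $t$ with $p_t\in V(P_m)$ (these exist because any two longest paths meet). Put $c=\max_m L_m$ and $d=\min_m R_m$, so that the prefix $p_0\cdots p_c$ and the suffix $p_d\cdots p_{\ell}$ each meet all of $P_1,\dots,P_{k-1}$, while $p_0\cdots p_{c-1}$ and $p_{d+1}\cdots p_{\ell}$ each miss at least one. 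I would then establish $c\le d$ and from this either exhibit a common vertex or construct two edge-disjoint good paths on $P$.

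The heart of the argument — and the step I expect to be the main obstacle — is a longest-path exchange argument showing $L_m\le \ell/2\le R_m$ for every $m$; the abstract interval combinatorics alone do not suffice (one can write down intersection patterns with $t'_{\Pa}(P)=1$ and no common point), so the maximality of $P$ must enter here. Concretely, at the rightmost meeting vertex $p_{R_m}$ the two sub-branches of $P_m$ emanating from $p_{R_m}$ have total length $\ell$ and both avoid $p_{R_m+1},\dots,p_{\ell}$; concatenating the longer branch (of length at least $\ell/2$) with the segment $p_{R_m}\cdots p_{\ell}$ yields a path, which by maximality of $P$ has length at most $\ell$, forcing $R_m\ge \ell/2$. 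The symmetric computation at $p_{L_m}$ gives $L_m\le \ell/2$. Consequently $c=\max_m L_m\le \ell/2\le \min_m R_m=d$.

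To finish, let $L$ be the largest index with $p_L\cdots p_c$ meeting every $P_m$, and let $R$ be the smallest index with $p_d\cdots p_R$ meeting every $P_m$. For the window $p_L\cdots p_c$, dropping $p_c$ loses the path $P_j$ with $L_j=c$ and dropping $p_L$ loses some path $P_{j'}$, so each end is the unique vertex of its path inside the window; by this same uniqueness no interior vertex of the window lies on $P_j$ or $P_{j'}$. If $j=j'$ the window collapses to the single vertex $p_c$, which then lies on every $P_m$ and is the desired common vertex, giving $f(G,\Pa)=0$; otherwise $p_L\cdots p_c$ is a good path. The same dichotomy applies to $p_d\cdots p_R$. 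Thus, unless a common vertex already appears, both windows are good paths, and since $c\le d$ they lie in $p_0\cdots p_c$ and $p_d\cdots p_{\ell}$ respectively and hence share no edge, so $t'_{\Pa}(P)\ge 2$. Taking the contrapositive, $t'_{\Pa}(P)=1$ forces a common vertex, i.e.\ $f(G,\Pa)=0$.
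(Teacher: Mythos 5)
Your proof is correct and is essentially the paper's own argument: your exchange bound $L_m\le \ell/2\le R_m$ is exactly the half-length bound the paper derives for the right end of its ``left-most'' good path $Q$ and the left end of its ``right-most'' good path $Q'$, and your two edge-disjoint windows $p_L\cdots p_c$ and $p_d\cdots p_R$ play the roles of $Q$ and $Q'$, with your direct conclusion $t'_{\Pa}(P)\ge 2$ being the contrapositive of the paper's contradiction that $Q$ and $Q'$ sharing an edge would force $P$ to be shorter than $\ell(G)$. If anything, your rendering is slightly more complete, since the explicit window construction also establishes the existence of good paths and handles the degenerate single-vertex case (an immediate common vertex), steps the paper asserts rather than proves.
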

 
\begin{proof}
 To the contrary, we suppose there is a path $P = v_1 v_2 ... v_{\ell(G)+1}$ with $t'_{\Pa}(P)=1$ and $f(G,\Pa)>0$. By $f(G,\Pa)>0$, every good path on $P$
 contains an edge. We consider the 'left-most' good path $Q$ on $P$; more formally, we consider the good path $Q  = v_i v_{i+1} ... v_j$ such that there
 is no good path on $P$ containing a vertex $v_k$ with $k < i$. Let $\Pa_j$ denote the set of paths of $\Pa$ which contain $v_j$. By the choice of $Q$, 
 some path of $\Pa_j$ contains no vertex $v_k$ with $k < j$, and thus the length of $v_1 v_2 ... v_j$ is at most $\frac{1}{2}\ell(G)$. Similarly, we consider
 the 'right-most' good path $Q' = v_{i'} v_{i'+1} ... v_{j'}$ and we see that the length of $v_{i'} v_{i'+1} ... v_{\ell(G)+1}$ is at most 
 $\frac{1}{2}\ell(G)$. By the assumption $t'_{\Pa}(P)=1$, the paths $Q$ and $Q'$ have an edge in common, so $j > i'$, hence the length of $P$ is shorter 
 than $\ell(G)$, a contradiction.
\end{proof}
 
\begin{lemma}
 \label{X1X2}
 Let $G$ be a connected graph and $\Pa \subseteq \La(G)$ with $|\Pa|=k\geq3$. Let $P\in \Pa$ and let $Q$ be a good path on $\Pa$.  
 Then the following two statements hold:
 \begin{itemize}
  \item[(i)] $f(G,\Pa)\leq\frac{|V(Q)|-1}{2}(k-1)$;
  \item[(ii)]  $|X^1_{\Pa}(P)\cup X^2_{\Pa}(P)\cup ...\cup X^{k-2}_{\Pa}(P)|\geq t'_{\Pa}(P)(\frac{2}{k-1}f(G,\Pa)-1)$. 
 \end{itemize}  
\end{lemma}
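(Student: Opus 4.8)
The plan is to prove (i) by an averaging argument over the two end-vertices of $Q$, and then to deduce (ii) by applying (i) to each member of a maximum family of edge-disjoint good paths on $P$.

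For (i), I would first record that a good path meets every path of $\Pa$: it lies on its host path (call it $R$), so it meets $R$, and by definition $V(Q)\cap V(P_m)\neq\emptyset$ for each of the remaining $k-1$ paths $P_m$. Write $L=|V(Q)|-1$ and let $u,w$ be the two end-vertices of $Q$, both of which lie on $R$. For each path $P'\neq R$ choose a vertex $z_{P'}\in V(Q)\cap V(P')$; since distance along $Q$ dominates distance in $G$, I get $d_G(u,V(P'))\le d_Q(u,z_{P'})$ and $d_G(w,V(P'))\le d_Q(w,z_{P'})$, and these two quantities sum to exactly $L$ because $z_{P'}$ lies between $u$ and $w$ on $Q$. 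As $d_G(u,V(R))=d_G(w,V(R))=0$, evaluating the path-distance-function at $u$ and at $w$ yields $f(G,\Pa)\le\sum_{P'\neq R}d_Q(u,z_{P'})$ and $f(G,\Pa)\le\sum_{P'\neq R}d_Q(w,z_{P'})$; adding these gives $2f(G,\Pa)\le (k-1)L$, which is exactly (i). The clean factor $\tfrac12$ comes precisely from pairing the two endpoints rather than using a single central vertex.

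For (ii), I would fix a maximum family $Q_1,\dots,Q_{t'}$ of pairwise edge-disjoint good paths on $P$, where $t'=t'_{\Pa}(P)$. Applying (i) to each $Q_s$ gives $|V(Q_s)|-1\ge \tfrac{2}{k-1}f(G,\Pa)$, so $Q_s$ has at least $\tfrac{2}{k-1}f(G,\Pa)-1$ int-vertices. The key observation is that every int-vertex of a good path lies on at most $k-2$ paths of $\Pa$: if $Q_s$ is a $V(P_i)$-$V(P_j)$ path, then by goodness neither $P_i$ nor $P_j$ contains an int-vertex of $Q_s$, so such a vertex misses at least the two paths $P_i$ and $P_j$ (both distinct from $P$) and hence belongs to $X^1_{\Pa}(P)\cup\cdots\cup X^{k-2}_{\Pa}(P)$. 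Finally, since the $Q_s$ are edge-disjoint subpaths of the single path $P$, any two of them overlap in at most a shared end-vertex, so their int-vertices are pairwise distinct. Summing the int-vertex counts over $s=1,\dots,t'$ then gives $|X^1_{\Pa}(P)\cup\cdots\cup X^{k-2}_{\Pa}(P)|\ge t'_{\Pa}(P)\bigl(\tfrac{2}{k-1}f(G,\Pa)-1\bigr)$, as required.

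I expect the two bookkeeping points in (ii) to be where care is most needed: verifying that an int-vertex of $Q_s$ genuinely avoids both $P_i$ and $P_j$ and so lands in the claimed union, and confirming that edge-disjointness of subpaths of $P$ forces their interiors to be vertex-disjoint, so that the counts add without overcounting. Neither is deep, but both hinge on reading the definition of a good path carefully; the endpoint-averaging step in (i) is the only genuinely creative move.
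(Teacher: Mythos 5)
Your proof is correct and takes essentially the same approach as the paper: part (i) is the paper's bound via a vertex of $Q$ minimizing $\sum_{P'\in\Pa}d_G(\cdot,V(P'))$, and your endpoint-averaging is precisely the implicit justification of that bound, while part (ii) matches the paper's count of int-vertices over a maximum family of non-intersecting good paths, using that such vertices avoid $P_i$ and $P_j$ and hence lie in $X^1_{\Pa}(P)\cup\cdots\cup X^{k-2}_{\Pa}(P)$. You merely spell out the bookkeeping (disjointness of the interiors of edge-disjoint subpaths of $P$, and the trivial case $f(G,\Pa)=0$, where both inequalities hold anyway) that the paper leaves terse.
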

 
\begin{proof}
 Note that if $f(G,\Pa) = 0$, then the statement holds. Suppose $f(G,\Pa)\geq 1$. In particular, every good path on $\Pa$ contains at least two vertices. 
 Let $x\in V(Q)$ such that $\sum_{P'\in \Pa}d_{G}(x,P')\leq\sum_{P'\in \Pa}d_{G}(y,P')$ for every $y\in V(Q)$. Then 
 $$f(G,\Pa)\leq \sum_{P'\in \Pa}d_{G}(x,P')\leq\frac{|V(Q)|-1}{2}(k-1).$$ For any path $P$ of $\Pa$ and any good path $Q'$ on $P$, no int-vertex of $Q'$ is 
 in $X^{k-1}_{\Pa}(P)$, therefore $|V(Q')\cap(|X^1_{\Pa}(P)\cup X^2_{\Pa}(P)\cup ...\cup X^{k-2}_{\Pa}(P))|\geq|V(Q')|-2\geq \frac{2}{k-1}f(G,\Pa)-1$. 
 Let $\Qa$ be a maximum set of non-intersecting good paths on $P$. By the definition, $t'_{\Pa}(P) = |\Qa|$, and we have 
 $$|X^1_{\Pa}(P)\cup X^2_{\Pa}(P)\cup ...\cup X^{k-2}_{\Pa}(P)|\geq |\cup_{Q\in \Qa}(V(Q)\cap(X^1_{\Pa}(P)\cup X^2_{\Pa}(P)\cup ...\cup X^{k-2}_{\Pa}(P)))|
 \geq$$ $$\geq \sum_{Q\in\Qa}(|V(Q)|-2)\geq t'_{\Pa}(P)\left( \frac{2}{k-1}f(G,\Pa)-1\right).$$
\end{proof}

\begin{corollary}
 \label{Cor1}
 Let $G$ be a connected graph and $\Pa \subseteq \La(G)$ with $|\Pa|=4$. Let $\Pa=\{P,P_1,P_2,P_3\}$ and let $Q$ be a good path on $\Pa$. 
 Then the following two statements hold:
 \begin{itemize}
  \item[(i)] $f(G,\Pa)\leq|V(Q)|-1$;
  \item[(ii)] $|X^1_{\Pa}(P)\cup X^2_{\Pa}(P)|\geq t'_{\Pa}(P)(f(G,\Pa)-1)$. 
 \end{itemize} 
\end{corollary}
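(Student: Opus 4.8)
The plan is to mimic the proof of Lemma~\ref{X1X2} specialized to $k=4$, but to sharpen both constants by exploiting that, besides $P$, there are exactly three paths $P_1,P_2,P_3$. A direct substitution of $k=4$ into Lemma~\ref{X1X2} would only yield the weaker inequalities $f(G,\Pa)\le\frac{3}{2}(|V(Q)|-1)$ and $|X^1_{\Pa}(P)\cup X^2_{\Pa}(P)|\ge t'_{\Pa}(P)(\frac{2}{3}f(G,\Pa)-1)$; the whole point is to gain the factor improvement, and the genuine content lies in choosing a better evaluation vertex in part~(i).

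For part~(i), instead of evaluating the path-distance sum at the midpoint of $Q$, I would write $Q=q_0q_1\dots q_s$ with $s=|V(Q)|-1$ and use that $Q$, being good, is a $V(P_i)-V(P_j)$ path for two of the indices, say with $q_0\in V(P_i)$ and $q_s\in V(P_j)$, while its third partner $P_\ell$ (where $\{i,j,\ell\}=\{1,2,3\}$) meets $Q$ at some vertex $x=q_a$. Evaluating at this $x$ is the key move: since $x\in V(Q)\subseteq V(P)$ and $x\in V(P_\ell)$, we get $d_G(x,P)=d_G(x,P_\ell)=0$, while walking along $Q$ gives $d_G(x,P_i)\le a$ and $d_G(x,P_j)\le s-a$. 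Summing yields $\sum_{P'\in\Pa}d_G(x,P')\le a+(s-a)=|V(Q)|-1$, and because $f(G,\Pa)\le\sum_{P'\in\Pa}d_G(x,P')$ this is exactly~(i). I would only need to note that the degenerate case $f(G,\Pa)=0$ (including single-vertex good paths) holds trivially.

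For part~(ii), I would first observe that for any good path $Q'$ on $P$, every int-vertex $w$ of $Q'$ lies on $P$ but, by the definition of a good path, on neither of the two endpoint-paths of $Q'$; since only three paths other than $P$ are available, $w$ lies on at most one of $P_1,P_2,P_3$, hence on at most two paths of $\Pa$ in total, so $w\in X^1_{\Pa}(P)\cup X^2_{\Pa}(P)$. Thus $|V(Q')\cap(X^1_{\Pa}(P)\cup X^2_{\Pa}(P))|\ge|V(Q')|-2$, and applying part~(i) to $Q'$ gives $|V(Q')|-2\ge f(G,\Pa)-1$. I would then sum over a maximum family $\Qa$ of non-intersecting good paths on $P$, for which $|\Qa|=t'_{\Pa}(P)$ by definition.

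The one subtlety I would flag for care is the additivity of this final sum: the paths in $\Qa$ are only required to be edge-disjoint, so a priori they could share endpoints. However, being subpaths (intervals) of the single path $P$, edge-disjoint intervals overlap in at most one vertex, which is an endpoint — never an interior vertex — of each. Hence their interiors are pairwise disjoint, the int-vertex contributions $V(Q)\cap(X^1_{\Pa}(P)\cup X^2_{\Pa}(P))$ are disjoint across $\Qa$, and we conclude $|X^1_{\Pa}(P)\cup X^2_{\Pa}(P)|\ge\sum_{Q\in\Qa}(|V(Q)|-2)\ge t'_{\Pa}(P)(f(G,\Pa)-1)$, as required.
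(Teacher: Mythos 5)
Your proof is correct and takes essentially the same route as the paper: for (i) you evaluate the distance sum at a vertex $x\in V(Q)\cap V(P_\ell)$ on the third partner path (exactly the paper's choice $x\in V(Q)\cap V(P_3)$), and for (ii) you rerun the argument of Lemma~\ref{X1X2} with the sharpened bound (i) in place of Lemma~\ref{X1X2}(i), which is precisely what the paper does. Your closing remark that edge-disjoint good subpaths of $P$ have pairwise disjoint interiors makes explicit a point the paper leaves implicit, but it is the same argument.
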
 

\begin{proof}
 The proof is the same as the proof of Lemma~\ref{X1X2} with respect to the following. Let $u,v$ be end-vertices of $Q$. Assume that $Q$ is 
 a $V(P_1)-V(P_2)$ path on $P$ (otherwise we renumber the paths) and we consider a vertex $x\in V(Q)\cap V(P_{3})$. 
 Then $$f(G,\Pa)\leq \sum_{P\in \Pa}d_{G}(x,P)=d_{G}(x,P_{1})+d_{G}(x,P_{2})\leq d_{G}(u,v)\leq |V(Q)|-1.$$ 
 Then we use Corollary~\ref{Cor1}(i) instead of Lemma~\ref{X1X2}(i) and the result follows.
\end{proof}
 
\noindent
\emph{Proof of Theorem~\ref{kpath}.}
 Suppose that $f(G,\Pa)\geq 1$. Hence $t'_{\Pa}(P)\geq 2$ by Lemma~\ref{Seg1}. Let $P \in \Pa$ be a path minimizing 
 $|X^1_{\Pa}(P)\cup X^2_{\Pa}(P)\cup ...\cup X^{k-2}_{\Pa}(P)|$. Let $\Pa-\{P\}=\{P_{1}, P_{2},...,P_{k-1}\}$ and $u_{i}, v_{i}$ be the end-vertices 
 of $P_{i}$ for $i\in\{1,2,...,k-1\}$. Assume that $Q$ is a good $V(P_1)-V(P_2)$ path on $P$ with end-vertices $u, v$ (otherwise we renumber paths 
 $P_{1}, P_{2},...,P_{k-1}$). Let $R$ be the shortest $\{u\}-V(P_{2})$ path on $P_{1}$ and $x \in V(R)\cap V(P_{2})$. We may assume that 
 $|V(u_{2}P_{2}v)|\leq|V(u_{2}P_{2}x)|$ (see Figure~\ref{fig1}). 
 
\begin{figure}[ht]
 \begin{center}
 \includegraphics[width=6cm]{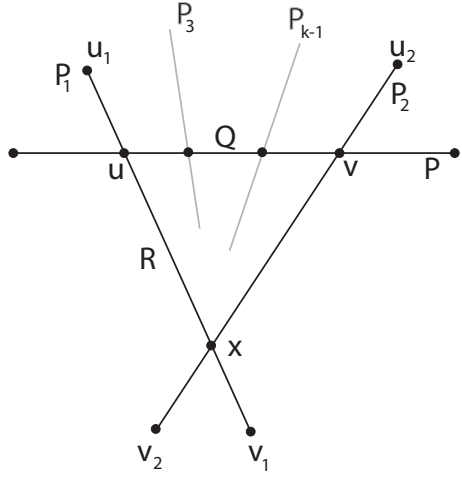}
 \end{center}\caption{A good $V(P_1)-V(P_2)$ path $Q$ and path $R$}\label{fig1}
\end{figure}
 
 We have $|V(R)|\geq 2$ from $f(G,\Pa)\geq 1$ and $|V(Q)|\geq \frac{2f(G,\Pa)}{k-1}+1$ from Lemma~\ref{X1X2}(i). Since $vQ$\emph{\v{u}} contains no vertex 
 of $V(P_{1})$, $vQuRx$ is a path in $G$. Futhermore, since \emph{\v{v}}$QuP_{1}$\emph{\v{x}} contains no vertex of $V(P_{2})$, 
 $S_{1}=v_{2}P_{2}vQuR$\emph{\v{x}}, $S_{2}=u_{2}P_{2}vQuRxP_{2}v_{2}$, and $S_{3}=u_{2}P_{2}xRuQ$\emph{\v{v}} are paths in $G$ (see Figure~\ref{fig2}).

\begin{figure}[ht]
 \begin{center}
 \includegraphics[width=12cm]{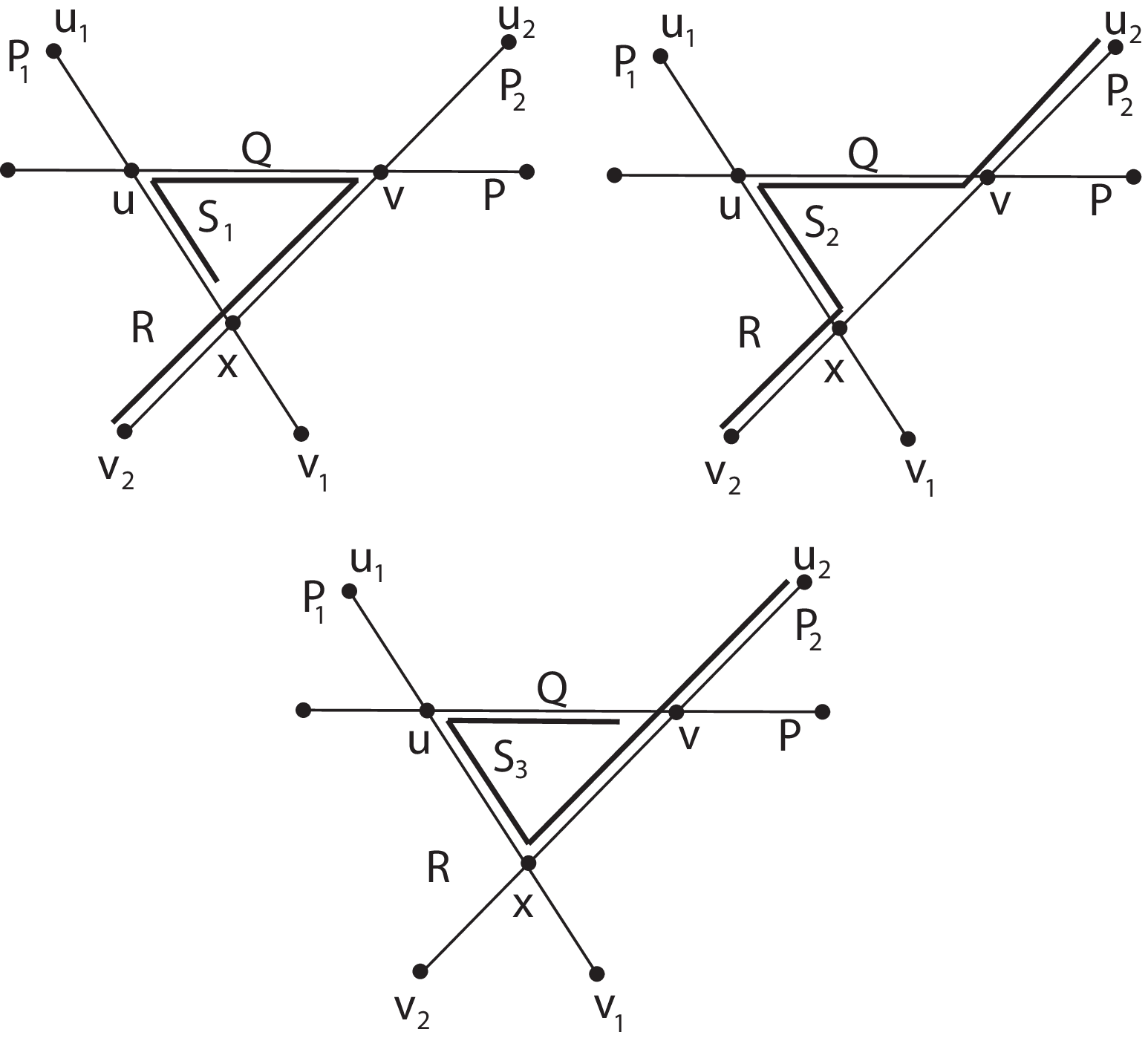}
 \end{center}
 \caption{Paths $S_{1}$, $S_{2}$, and $S_{3}$}
 \label{fig2}
\end{figure}

 By comparing the lengths of $P_{2}$ and $S_{1}$ and using Lemma~\ref{X1X2}(i) and $|V(R)|\geq 2$, we have 
 $$|V(u_{2}P_{2}v)|-1\geq |V(Q)|-1+|V(R)|-2\geq|V(Q)|-1\geq \frac{2f(G,\Pa)}{k-1}.$$
  
 Similarly for $P_{2}$ and $S_{2}$, we have 
 $$|V(vP_{2}x)|-1\geq |V(Q)|-1+|V(R)|-1\geq|V(Q)|\geq \frac{2f(G,\Pa)}{k-1}+1.$$
  
 Also for $P_{2}$ and $S_{3}$, we have 
 $$|V(xP_{2}v_{2})|-1\geq |V(Q)|-1+|V(R)|-2\geq|V(Q)|-1\geq \frac{2f(G,\Pa)}{k-1}.$$
  
 Therefore all together we have
 $$\ell(G)=|V(P_{2})|-1=|V(u_{2}P_{2}v)|-1+|V(vP_{2}x)|-1+|V(xP_{2}v_{2})|-1\geq$$
 $$\geq \frac{2f(G,\Pa)}{k-1}+\frac{2f(G,\Pa)}{k-1}+1+\frac{2f(G,\Pa)}{k-1}=\frac{6f(G,\Pa)}{k-1}+1.~~~~~~~~~~(*)$$

 Clearly $n_{i}=\frac{1}{i} \sum_{P'\in\Pa}X^i_{\Pa}(P')$. By the choice of $P$ and $t'_{\Pa}(P')\geq 2$ for every $P'\in \Pa$
 together with~$(*)$, Lemma~\ref{nbound}, and Lemma~\ref{X1X2} we have
 $$n\geq\frac{k\cdot \ell(G)+k+(k-2)\sum_{P'\in\Pa}X^1_{\Pa}(P')+\frac{k-3}{2}\sum_{P'\in\Pa}X^2_{\Pa}(P')+...+\frac{1}{k-2}\sum_{P'\in\Pa}X^{k-2}_{\Pa}(P')}
 {k-1}\geq$$  
 $$\geq \frac{k\cdot \ell(G)+k+\frac{1}{k-2}(\sum_{P'\in\Pa}X^1_{\Pa}(P')+\sum_{P'\in\Pa}X^2_{\Pa}(P')+...+\sum_{P'\in\Pa}X^{k-2}_{\Pa}(P'))}{k-1}\geq$$    
 $$\geq \frac{k\cdot \ell(G)+k+\frac{k}{k-2}(X^1_{\Pa}(P)+X^2_{\Pa}(P)+...+X^{k-2}_{\Pa}(P))}{k-1}\geq$$
 $$\geq\frac{k(\frac{6f(G,\Pa)}{k-1}+1)+k+\frac{2k}{k-2}(\frac{2}{k-1}f(G,\Pa)-1)}{k-1}=\frac{(6k^{2}-8k)f(G,\Pa)+2k^{3}-8k^{2}+6k}{(k-2)(k-1)^{2}},$$
 and hence $f(G,\Pa)\leq \frac{(k^{3}-4k^{2}+5k-2)n-2k^{3}+8k^{2}-6k}{6k^{2}-8k}$. This completes the proof of Theorem~\ref{kpath}.~~~~~$\square$
 
\bigskip
 
\noindent
\emph{Proof of Theorem~\ref{4path}.}
 We proceed as in the proof of Theorem~\ref{kpath} and use Corollary~\ref{Cor1}(i) instead of Lemma~\ref{X1X2}(i).
  
 By comparing the lengths of $P_{2}$ and $S_{1}$ and using Corollary~\ref{Cor1}(i) and $|V(R)|\geq 2$, we have 
 $$|V(u_{2}P_{2}v)|-1\geq |V(Q)|-1+|V(R)|-2\geq |V(Q)|-1\geq f(G,\Pa).$$
 
 Similarly for $S_{2}$ and $S_{3}$, we have 
 $$|V(vP_{2}x)|-1\geq |V(Q)|-1+|V(R)|-1\geq |V(Q)|\geq f(G,\Pa)+1,$$
 $$|V(xP_{2}v_{2})|-1\geq |V(Q)|-1+|V(R)|-2\geq |V(Q)|-1\geq f(G,\Pa).$$
  
 Therefore all together we have 
 $$\ell(G)=|V(P_{2})|-1=|V(u_{2}P_{2}v)|-1+|V(vP_{2}x)|-1+|V(xP_{2}v_{2})|-1\geq$$
 $$\geq f(G,\Pa)+f(G,\Pa)+1+f(G,\Pa)=3f(G,\Pa)+1.~~~~~~~~~~(**)$$
 
 By the choice of $P$ and $t'_{\Pa}(P')\geq 2$ for every $P'\in \Pa$ together with~$(**)$, Lemma~\ref{X1X2}, and Lemma~\ref{Seg1} we have
 $$n\geq\frac{4\ell(G)+4+2\sum_{P'\in\Pa}X^1_{\Pa}(P')+\frac{1}{2}\sum_{P'\in\Pa}X^2_{\Pa}(P')}{3}\geq$$ 
 $$\geq\frac{4(3f(G,\Pa)+1)+4+4(f(G,\Pa)-1)}{3}=\frac{16f(G,\Pa)+4}{3},$$  
 and hence $f(G,\Pa)\leq \frac{3n-4}{16}$. This completes the proof of Theorem~\ref{4path}.~~~~~~~~~~~~~~~~~~~~~~~~~~~$\square$  

\section{Conclusion}
As it was mentioned in Introduction, we extend Conjecture~\ref{3path} to Conjecture~\ref{3-6path}.

\begin{conjecture}
 \label{3-6path}
 For every connected graph, any $k$ of its longest paths have a common vertex for $3\leq k\leq 6$.
\end{conjecture}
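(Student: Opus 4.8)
The final statement is Conjecture~\ref{3-6path}, asserting that in every connected graph any $k$ longest paths share a common vertex whenever $3 \le k \le 6$. The first step of the plan is to recast it in the language developed above. A common vertex of all paths in $\Pa$ is precisely a vertex $v$ with $\sum_{P \in \Pa} d_G(v, V(P)) = 0$, so the conjecture is equivalent to the assertion that $f(G, \Pa) = 0$ for every $\Pa \subseteq \La(G)$ with $|\Pa| = k$ and $3 \le k \le 6$. Thus the goal is to upgrade the \emph{positive} upper bounds of Theorems~\ref{4path} and~\ref{kpath} to the exact value $0$ in the range $k \le 6$. I would argue by contradiction: assume $f(G, \Pa) \ge 1$. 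By Lemma~\ref{Seg1} this forces $t'_{\Pa}(P) \ge 2$ for every $P \in \Pa$, so on each path there are at least two edge-disjoint good paths, and the whole analysis leading to inequalities $(*)$ and $(**)$ becomes available.

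The heart of the attack would be to replace the \emph{single} length comparison $(*)$ by an outright contradiction. Recall that $(*)$ already extracts $\ell(G) \ge \frac{6f(G,\Pa)}{k-1} + 1$ from comparing $P_2$ with the three recombined paths $S_1, S_2, S_3$; the factor $6$ reflects that the segments $u_2 P_2 v$, $v P_2 x$, $x P_2 v_2$ each absorb a full copy of the good path $Q$. The plan is to iterate this recombination using \emph{all} the good paths guaranteed by $t'_{\Pa}(P) \ge 2$ on every member of $\Pa$ simultaneously, rather than on a single $P_2$. Because the paths are pairwise intersecting (Proposition~3 of~\cite{FuFuNaOz}) but, by assumption, share no common vertex, their interleaving on a fixed longest path falls into finitely many combinatorial types once $k \le 6$ is fixed. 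For each such type one would try to splice segments of $P_1, \dots, P_{k-1}$ along the good paths into a single path whose length strictly exceeds $\ell(G)$, contradicting maximality. The essential feature that must be exploited is the sharp cut-off at $k = 7$: Skupie\'n's $17$-vertex graph realises a genuine obstruction, so any valid argument must consume the inequality $k \le 6$ and break for $k = 7$. This strongly suggests a case analysis driven by how few ways at most six pairwise-intersecting longest paths can interleave without a common vertex.

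The main obstacle is exactly the step that no existing technique performs: passing from a bounded path-distance-ratio to the value $0$. Even the case $k = 3$ is Zamfirescu's long-standing open Conjecture~\ref{3path}, and the distance framework of this paper was designed to \emph{bound} $f(G, \Pa)$, not to annihilate it. Inequality $(*)$ only yields $\ell(G) \ge \frac{6f(G,\Pa)}{k-1}+1$, which, combined with Lemma~\ref{nbound}, reproduces the ratio $\frac{k^3 - 4k^2 + 5k - 2}{6k^2 - 8k}$; there is no slack in these estimates that would force $f(G, \Pa) = 0$. To close the gap one would need a genuinely global control of how the $k$ longest paths interleave, not merely the local, one-path-at-a-time accounting used here, and it is precisely this global recombination argument that I expect to be the hard part. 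Absent such an idea, the honest position is that the conjecture remains open, and the distance bounds proved above should be viewed as the current best quantitative approximation to it.
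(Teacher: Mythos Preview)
Your proposal is not a proof, and you yourself say so in the final paragraph: ``the honest position is that the conjecture remains open.'' That assessment is correct and matches the paper exactly. Conjecture~\ref{3-6path} is \emph{stated as a conjecture}, not proved; the paper offers no argument establishing it, and indeed the case $k=3$ alone is Zamfirescu's long-standing open Conjecture~\ref{3path}. What the paper does with Conjecture~\ref{3-6path} is only to show (Theorem~\ref{equivalent}) that it is equivalent to the sublinearity statement of Conjecture~\ref{sublin}.

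The earlier part of your write-up is therefore not a proof attempt to be compared with the paper's proof---there is none---but rather a sketch of a possible strategy. You correctly identify the central obstacle: the inequalities $(*)$ and $(**)$ together with Lemma~\ref{nbound} are calibrated to bound $f(G,\Pa)$ linearly in $n$, and contain no slack that would force $f(G,\Pa)=0$. Your suggested ``global recombination'' and finite case analysis of interleaving types for $k\le 6$ is a reasonable heuristic, but as you note it is exactly the missing idea, not a step you carry out. In short: the paper does not prove this statement, you do not prove it either, and your closing diagnosis that the distance bounds of Theorems~\ref{4path} and~\ref{kpath} are the current quantitative substitute for a proof is the right conclusion.
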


Conjecture~\ref{sublin} is an extension of a Conjecture stated in~\cite{FuFuNaOz} for $3$ longest paths. We prove that Conjecture~\ref{sublin} 
is equivalent with Conjecture~\ref{3-6path}.

\begin{conjecture}
 \label{sublin}
 There exists a sublinear function $g$ such that for every connected graph $G$ of order $n$ and every subset $\Pa$ of $\La(G)$ with $3\leq |\Pa|\leq 6$, 
 $f(G,\Pa)\leq g(n)$.
\end{conjecture}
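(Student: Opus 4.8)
The plan is to upgrade the \emph{linear} bounds of Theorem~\ref{4path} and Theorem~\ref{kpath} into a sublinear one. Those bounds are essentially optimal for the counting scheme behind them: they compose the estimate $n\gtrsim\frac{k}{k-1}\ell(G)$ coming from Lemma~\ref{nbound} with the estimate $\ell(G)\gtrsim\frac{6}{k-1}f(G,\Pa)$ coming from $(*)$, and since both are linear, so is their composition. To beat this one must instead prove a \emph{superlinear} lower bound $n\ge h\bigl(f(G,\Pa)\bigr)$ with $h(t)/t\to\infty$; inverting gives $f(G,\Pa)\le g(n)$ with $g$ sublinear. It is worth noting what a hypothetical \emph{linear} witness would have to look like. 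By $(*)$ and Lemma~\ref{nbound}, $f=\Theta(n)$ forces $n=O(f)$, hence $\ell(G)=\Theta(f)=\Theta(n)$ with all overlap counts $n_i$ small. Thus ruling out a linear witness is exactly ruling out $3\le|\Pa|\le6$ longest paths of length $\Theta(f)$ that are mutually $\Theta(f)$-far apart in a graph on $\Theta(f)$ vertices, i.e.\ the quantitative form of the common-vertex phenomenon.

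I would attack this by a descent on the order. Fix $3\le|\Pa|\le6$ and assume $f(G,\Pa)$ is large, so by Lemma~\ref{Seg1} and $(*)$ every $P\in\Pa$ carries a good path of length $\gtrsim f(G,\Pa)$ and $\ell(G)\gtrsim f(G,\Pa)$. The idea is to reuse the rerouting machinery behind the paths $S_{1},S_{2},S_{3}$ in the proof of Theorem~\ref{kpath}: swapping along the intersection structure of two members of $\Pa$ should expose a block of $\Omega(\ell(G))=\Omega\bigl(f(G,\Pa)\bigr)$ vertices whose excision leaves a strictly smaller connected graph $G'$, in which one recovers a family $\Pa'$ with $3\le|\Pa'|\le6$ of longest paths whose spread is almost unchanged, $f(G',\Pa')\ge f(G,\Pa)-c$ for an absolute constant $c$. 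Iterating, each step deletes $\Omega\bigl(f(G,\Pa)-ci\bigr)$ vertices while lowering the spread by only $O(1)$, so summing over the $\Theta\bigl(f(G,\Pa)\bigr)$ steps yields $n\ge\Omega\bigl(f(G,\Pa)^{2}\bigr)$, that is $f(G,\Pa)\le g(n)=O(\sqrt n)$, which is sublinear.

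The hard part will be engineering the reduction $G\to G'$ so that the candidate members of $\Pa'$ are genuinely \emph{longest} paths of $G'$, not merely long ones: being longest is a global property, and deleting $\Omega(\ell(G))$ vertices can simultaneously shorten $\ell$ and spawn new, shorter longest paths that wreck the configuration. Preserving $f$ up to an additive constant while $\Omega(f)$ vertices vanish is equally delicate, since the rerouted paths $S_{1},S_{2},S_{3}$ must survive into $G'$. A secondary obstacle is keeping $|\Pa'|$ inside the window $[3,6]$, and this is precisely where the hypothesis $k\le6$ must enter: Skupie\'n's construction in~\cite{skup} shows that $d_{k}(\mathcal{G}_{c})$ stays bounded away from $0$ for $k\ge7$, so any correct descent is forced to fail there. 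I would first carry out the entire scheme for $|\Pa|=3$, where the good-path and swapping structure is simplest and the target statement is cleanest, and only afterwards adapt it to $|\Pa|\in\{4,5,6\}$.
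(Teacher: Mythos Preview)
The statement you are attacking is a \emph{conjecture} in the paper, not a theorem: the paper does not prove it. What the paper does prove about this statement is Theorem~\ref{equivalent}, namely that Conjecture~\ref{sublin} is \emph{equivalent} to Conjecture~\ref{3-6path} (that any $k$ longest paths, $3\le k\le 6$, have a common vertex). The forward direction is trivial; the reverse is the edge-subdivision blow-up in the paper, which takes a putative counterexample to Conjecture~\ref{3-6path} and manufactures a sequence $(G_t,\Pa_t)$ with $f(G_t,\Pa_t)\ge t$ while $|V(G_t)|$ grows only linearly in $t$, so no sublinear $g$ can work. In particular, a correct proof of Conjecture~\ref{sublin} would immediately settle Zamfirescu's long-open question (Conjecture~\ref{3path}) for $k=3$.

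Your descent sketch is therefore not a variant of the paper's proof but an attempt on an open problem, and the gaps you yourself flag are genuine and, as far as anyone knows, unfilled. The crucial step is producing $G'$ and $\Pa'$ with $\Pa'\subseteq\La(G')$: after deleting $\Omega(\ell(G))$ vertices you have no control over what the \emph{new} longest paths of $G'$ look like, and there is no mechanism in your outline (nor anywhere in the paper's toolkit of Lemmas~\ref{nbound}--\ref{X1X2} and the rerouting $S_1,S_2,S_3$) that forces the surviving or rerouted paths to be longest in $G'$. Without that, the recursion does not even start. You are right that the hypothesis $k\le 6$ must be used in an essential way, but your proposal never actually invokes it; the whole argument as written is insensitive to the value of $k$, which by Skupie\'n's construction means it cannot be correct as stated.
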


Let $\mathcal{G}_n$ be a class of connected graphs of order at least $n$. In other words, using $d_k(\mathcal{G}_n)$ with $3\leq k \leq 6$, 
Conjecture~\ref{sublin} translates into the following statement. The path distance ratio $d_k(\mathcal{G}_n)$ goes to 0 as $n$ goes to infinity.

\begin{theorem}
 \label{equivalent}
 Conjecture~\ref{3-6path} is true if and only if Conjecture~\ref{sublin} is true.
\end{theorem}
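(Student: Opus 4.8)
The plan is to prove the two implications separately; the forward one is immediate and the backward one carries all the content. Suppose first that Conjecture~\ref{3-6path} holds. Then for every connected graph $G$ and every $\Pa\subseteq\La(G)$ with $3\le|\Pa|\le 6$ the paths of $\Pa$ share a vertex $v$, whence $d_G(v,V(P))=0$ for each $P\in\Pa$ and therefore $f(G,\Pa)=0$. Thus the identically zero function $g\equiv 0$, which is sublinear, witnesses Conjecture~\ref{sublin}, and this direction is done.

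For the converse I would argue by contraposition. Assuming Conjecture~\ref{3-6path} fails, I produce connected graphs on $n\to\infty$ vertices whose path-distance-ratio stays bounded below by a fixed positive constant; since every sublinear $g$ satisfies $g(n)/n\to 0$, such a family makes the bound $f\le g(n)$ impossible. Fix then a counterexample $G_0$ together with $\Pa_0=\{P_1,\dots,P_{k_0}\}\subseteq\La(G_0)$, $3\le k_0\le 6$, and $f_0:=f(G_0,\Pa_0)\ge 1$. The amplification device is uniform subdivision: let $G_0^{(s)}$ arise from $G_0$ by replacing each edge with a path of length $s$, and let $\Pa_0^{(s)}$ be the subdivisions of $P_1,\dots,P_{k_0}$. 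Then $|V(G_0^{(s)})|=|V(G_0)|+(s-1)\,|E(G_0)|=\Theta(s)$, and the goal is to show $f(G_0^{(s)},\Pa_0^{(s)})=s\,f_0$, so that the ratio tends to the positive constant $f_0/|E(G_0)|$.

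The distance computation is the clean part and is unconditional. For an original vertex $x$ one has $d_{G_0^{(s)}}(x,P_i^{(s)})=s\,d_{G_0}(x,P_i)$, since any subdivision vertex of $P_i^{(s)}$ is reached only through an original endpoint of its edge. For a vertex at position $j$ on a subdivided edge with original endpoints $a,b$ and not lying on $P_i^{(s)}$, the same observation gives $d(\cdot,P_i^{(s)})=\min\bigl(j+s\,d_{G_0}(a,P_i),\,(s-j)+s\,d_{G_0}(b,P_i)\bigr)$, a concave ``tent'' function of $j$ (with the value being the constant $0$ when the edge lies on $P_i$). Summing over $i$ preserves concavity, so the minimum of $\sum_i d(\cdot,P_i^{(s)})$ along each subdivided edge is attained at an endpoint, i.e.\ at an original vertex; hence the global minimiser is original and the minimum equals $s\,f_0$. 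Consequently $f(G_0^{(s)},\Pa_0^{(s)})/|V(G_0^{(s)})|\to f_0/|E(G_0)|>0$, so for large $s$ this ratio exceeds a fixed $c>0$ while $|V(G_0^{(s)})|\to\infty$, and these graphs violate every sublinear bound.

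The main obstacle is to guarantee that $\Pa_0^{(s)}\subseteq\La(G_0^{(s)})$, i.e.\ that subdivision creates no longer paths, since only then is $f(G_0^{(s)},\Pa_0^{(s)})$ a legitimate value of the path-distance-function over longest paths. It can fail: a path of $G_0^{(s)}$ has a ``core'' of original vertices forming a path of $G_0$, and one may lengthen a subdivided longest path by ``dangling'' past an endpoint into an incident edge that returns to the core, adding up to $2(s-1)$ edges; a short count shows the core of a longest path of $G_0^{(s)}$ still has length $\ell(G_0)$ or $\ell(G_0)-1$. Note that merely extending the $P_i^{(s)}$ by such dangling is not admissible, because enlarging paths can only \emph{decrease} $f$; instead one must \emph{suppress} dangling, so that $\ell(G_0^{(s)})=s\,\ell(G_0)$ exactly and the $P_i^{(s)}$ stay longest. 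I would secure this by a mild modification of the construction---for instance by choosing $G_0$ so that no longest path is extendable by a partial detour at an endpoint, or by attaching at each original vertex a small blocking gadget that cannot be threaded into a long path---designed to add only $\Theta(s)$ vertices and to leave the value $s\,f_0$ unchanged up to a lower-order term. Any correct such refinement keeps $f$ proportional to $s$ and $|V|$ linear in $s$, so the positive lower bound on the ratio survives; the upper bounds of Theorems~\ref{4path} and~\ref{kpath} already show this ratio is bounded, so the whole task reduces to proving that it does not vanish.
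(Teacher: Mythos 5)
You handle the forward direction correctly, and your concavity computation (the ``tent'' functions along each subdivided edge, whose sum is concave and hence minimized at an original vertex) is a clean, correct way to see that uniform subdivision scales the path-distance-function exactly by $s$ --- arguably cleaner than the paper's terse ``by construction, $f(G_t,\Pa_t)\geq t$.'' But your proof has a genuine gap exactly where you flag it: you never establish that the subdivided paths $\Pa_0^{(s)}$ remain longest paths, and without $\Pa_0^{(s)}\subseteq\La(G_0^{(s)})$ the quantity $f(G_0^{(s)},\Pa_0^{(s)})$ is not a legitimate witness against Conjecture~\ref{sublin}. Your two fallback suggestions do not close it. You cannot ``choose $G_0$ so that no longest path is extendable by a partial detour,'' because $G_0$ is an arbitrary counterexample to Conjecture~\ref{3-6path}; you have no freedom beyond its existence, so any fix must be a modification that provably works for every $G_0$. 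And a ``blocking gadget that cannot be threaded into a long path'' attacks the wrong object: dangling occurs along subdivided \emph{original} edges incident to the endpoints of a path's core, so no attachable gadget can prevent it --- suppressing it outright would require altering the original edges, which changes $\ell$ and $f$ in uncontrolled ways. As written, your closing sentence ``any correct such refinement keeps $f$ proportional to $s$'' is an assumption standing in for the entire nontrivial content of this direction.

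The paper closes precisely this gap with an additive rather than restrictive trick: \emph{before} subdividing, attach a pendant edge at every vertex that is an end-vertex of a path of $\Pa_0$, and prolong each $P_i$ by the two pendant edges at its ends. In the extended graph any path uses at most two pendant edges (one per end), so has length at most $\ell(G_0)+2$, which the prolonged paths attain. After subdividing every edge $t$ times, a path whose core (the stretch between its extreme branch vertices) has length $L$ and uses $p\in\{0,1,2\}$ pendant ends satisfies $L\leq \ell(G_0)+p$ (delete the pendant edges to get a path in $G_0$) and can dangle only at its $2-p$ non-pendant ends, so its length is at most $L(t+1)+(2-p)t\leq \ell(G_0)(t+1)+2t+p$. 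This is maximized only at $p=2$, with no dangling: a full subdivided pendant edge contributes $t+1$ where a dangle contributes at most $t$, so the pendant ends beat dangling by exactly one edge each. Hence the prolonged subdivided paths are longest, $f(G_t,\Pa_t)\geq t$ while $|V(G_t)|\leq n+t(m+2k)$ grows only linearly in $t$, and sublinearity fails. The moral is that dangling is defeated by rewarding the intended endpoints, not by blocking detours; supplying this construction (or an equivalent one) is what your argument is missing.
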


\begin{proof}
 Suppose Conjecture~\ref{3-6path} holds. For every set $\Pa$ of $k$ longest paths ($3\leq k\leq 6$) of every connected graph $G$, we have $f(G,\Pa) = 0$.
 Thus any non-negative sublinear function implies that Conjecture~\ref{sublin} holds.

 Suppose Conjecture~\ref{sublin} holds. We prove the contrapositive statement, that is, if Conjecture~\ref{3-6path} is not true, then neither is 
 Conjecture~\ref{sublin}. For $3\leq k\leq 6$, we consider a connected graph $G$ and a set $\Pa$ of its $k$ longest paths so that they have no common vertex. 
 We extend $G$ by adding a pendant edge to every vertex, which is an end-vertex of a path of $\Pa$, and we note that each path of $\Pa$ prolonged with two 
 of these new edges is a longest path in the extended graph. For a non-negative integer $t$, we subdivide every edge of the extended graph $t$ times and 
 we observe that the corresponding $k$ paths, say $\Pa_t$, are longest paths in the resulting graph $G_t$. Let $n$ be the number of vertices and $m$ the
 number of edges of $G$. We see that $G_t$ has at most $n + t(m +2k)$ vertices. By construction, $f(G_t,\Pa_t)\geq t$. We consider the sequence of graphs 
 $(G_t)_{t=1}^{\infty}$ and we note that $f(G_t,\Pa_t)$ cannot be bounded from above by a sublinear function.
\end{proof}

\section*{Acknowledgements}
This work was partly supported by the project LO1506 of the Czech Ministry of Education, Youth and Sports.

The first and third authors were supported by project GA14-19503S of the Grant Agency of the Czech Republic. 

The second author's research is supported by Grant-in-Aid for Scientic Research (C) (15K04979). Also, this work was partially completed during a visit 
of the second author to the University of West Bohemia. He wishes to express his thanks for the generous hospitality.

\end{document}